\newdimen\myMargin
\def\sgn{\mathop{\rm sgn}\nolimits}
\def\M{\mathcal{M}}
\def\S{\mathcal{S}}
\def\Sym#1{\mathrm{Sym}(#1)}
\newtheorem{Thm}{Theorem}
\newtheorem*{Thm*}{Theorem}
\newtheorem{Lem}{Lemma}
\newtheorem*{Lem*}{Lemma}
\newtheorem{Def}{Definition}
\newtheorem{Cor}{Corollary}
\begin{document}

\title{On the grasshopper problem with signed jumps}
\author{G\'eza K\'os}
\date{}
\maketitle

% - - - - - - - - - - - - - - - - - - - - - - - - - - - - - - - - - - - - -

\begin{abstract}
  The 6th problem of the 50th International Mathematical Olympiad
  (IMO), held in Germany, 2009, was the following.  \textit{Let
    $a_1,a_2,\dots,a_n$ be distinct positive integers and let $\M$ be
    a set of $n-1$ positive integers not containing
    $s=a_1+a_2+\dots+a_n$. A grasshopper is to jump along the real
    axis, starting at the point $0$ and making $n$ jumps to the right
    with lengths $a_1,a_2,\dots,a_n$ in some order. Prove that the
    order can be chosen in such a way that the grasshopper never lands
    on any point in $\M$.}  In this paper we consider a variant of the
  IMO problem when the numbers $a_1,a_2,\dots,a_n$ can be negative as
  well. We find the sharp minimum of the cardinality of the set $\M$
  which blocks the grasshopper, in terms of $n$. In contrast with the
  Olympiad problem where the known solutions are purely combinatorial,
  for the solution of the modified problem we use the polynomial
  method.
\end{abstract}

%==========================================================================

\section{Introduction}

\subsection{The original Olympiad problem}
\label{sect:imo}

The 6th problem of the 50th International Mathematical Olympiad (IMO),
held in Germany, 2009, was the following.

\begin{quote}
  Let $a_1,a_2,\dots,a_n$ be distinct positive integers and let $\M$
  be a set of $n-1$ positive integers not containing
  $s=a_1+a_2+\dots+a_n$. A grasshopper is to jump along the real axis,
  starting at the point $0$ and making $n$ jumps to the right with
  lengths $a_1,a_2,\dots,a_n$ in some order. Prove that the order can
  be chosen in such a way that the grasshopper never lands on any
  point in $\M$.
\end{quote}

For $n\ge2$ the statement of the problem is sharp. For arbitrary
positive numbers $a_1,\dots,a_n$ it is easy to find a ``mine field''
$\M$ of $n$ mines which makes the grasshopper's job impossible. Such
sets are, for example, $\M_1=\{a_1,a_2,\dots,a_n\}$ and
$\M_2=\{s-a_1,\dots,s-a_n\}$. In special cases more examples can be
found; for example if $(a_1,a_2,\dots,a_n)=(1,2,\dots,n)$ then any $n$
consecutive integers between $0$ and $s$ block the grasshopper.

\medskip

The problem has been discussed in many on-line forums, as much by
communities of students as by senior mathematicians; see, for example,
the Art of Problem Solving / Mathlinks forum~\cite{AoPS} or Terence
Tao's Mini-polymath project~\cite{TaoForum}.  Up to now, all known
solutions to the Olympiad problem are elementary and inductive.

\medskip

%--------------------------------------------------------------------

\subsection{Attempts to use the polynomial method}

Some students at the test, who were familiar with the polynomial
method, tried to apply Noga Alon's combinatorial Nullstellensatz (see
Lemmas~1.1 and~1.2 in~\cite{CN}). In this technique the problem is
encoded via a polynomial whose nonzeros are solutions in some given
domain.  A powerful tool to prove the existence of a point where the
polynomial does not vanish is the so-called \emph{combinatorial
  Nullstellensatz}.

\begin{Lem}[Nonvanishing criterion of the combinatorial
  Nullstellensatz]
  \label{thm:CN}
  Let $\S_1,\dots,\S_n$ be nonempty subsets of a field $F$, and let
  $t_1,\dots,t_n$ be nonnegative integers such that $t_i<|\S_i|$ for
  $i=1,2\,\dots,n$. Let $P(x_1,\dots,x_n)$ be a polynomial over $F$
  with total degree $t_1+\dots+t_n$, and suppose that the coefficient
  of $x_1^{t_1}x_2^{t_2}\cdots x_n^{t_n}$ in $P(x_1,\dots,x_n)$ is
  nonzero. Then there exist elements $s_1\in\S_1$, \dots, $s_n\in\S_n$
  for which $P(s_1,\dots,s_n)\ne0$.
\end{Lem}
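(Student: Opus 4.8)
The plan is the classical two-stage argument for Alon's criterion. Stage one produces, from $P$, a polynomial $\bar P$ that coincides with $P$ on the ``grid'' $\S_1\times\dots\times\S_n$, has degree at most $|\S_i|-1$ in each single variable $x_i$, and still has a nonzero coefficient at $x_1^{t_1}\cdots x_n^{t_n}$. Stage two shows that a nonzero polynomial with such small per-variable degrees cannot vanish on the whole grid; applied to $\bar P$ this yields a grid point where $\bar P$, and hence $P$, is nonzero --- which is the assertion.

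For stage one, put $g_i(x_i)=\prod_{s\in\S_i}(x_i-s)$; this is monic of degree $|\S_i|$, so writing $g_i(x_i)=x_i^{|\S_i|}-h_i(x_i)$ we have $\deg h_i\le|\S_i|-1$, and the relation $x_i^{|\S_i|}=h_i(x_i)$ holds for every $x_i\in\S_i$. I would now repeatedly pick, in the current polynomial, a monomial with $\deg_{x_i}\ge|\S_i|$ for some $i$ and replace one factor $x_i^{|\S_i|}$ in it by $h_i(x_i)$. Since $h_i$ involves only $x_i$ and has $x_i$-degree $\le|\S_i|-1$, each such step leaves all other per-variable degrees untouched and strictly lowers both the $x_i$-degree and the total degree of the monomial it consumes, producing only monomials of strictly smaller total degree; hence (by a routine monovariant) the procedure terminates at a polynomial $\bar P$ with $\deg_{x_i}\bar P\le|\S_i|-1$ for all $i$, $\deg\bar P\le\deg P$, and $\bar P=P$ on $\S_1\times\dots\times\S_n$ because every replacement is valid there. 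Finally, $x_1^{t_1}\cdots x_n^{t_n}$ has total degree exactly $t_1+\dots+t_n=\deg P$; since $t_i<|\S_i|$ this monomial is never eligible for a replacement, and no replacement ever creates a monomial of total degree $\deg P$. Therefore the coefficient of $x_1^{t_1}\cdots x_n^{t_n}$ is the same in $\bar P$ as in $P$, in particular nonzero, so $\bar P\ne0$.

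For stage two I would prove, by induction on $n$, that if $Q\in F[x_1,\dots,x_n]$ satisfies $\deg_{x_i}Q\le|\S_i|-1$ for all $i$ and $Q$ vanishes on all of $\S_1\times\dots\times\S_n$, then $Q=0$. The base case $n=1$ is the familiar statement that a nonzero polynomial of degree $\le|\S_1|-1$ cannot have the $|\S_1|$ distinct roots in $\S_1$. For the step, write $Q=\sum_{j=0}^{|\S_n|-1}Q_j(x_1,\dots,x_{n-1})\,x_n^{\,j}$; for each fixed $(s_1,\dots,s_{n-1})\in\S_1\times\dots\times\S_{n-1}$ the univariate polynomial $Q(s_1,\dots,s_{n-1},x_n)$ has degree $\le|\S_n|-1$ and vanishes on the $|\S_n|$-element set $\S_n$, hence is identically zero, so every coefficient $Q_j(s_1,\dots,s_{n-1})$ vanishes; as $(s_1,\dots,s_{n-1})$ ranges over the smaller grid, the inductive hypothesis gives $Q_j\equiv0$ for each $j$, whence $Q\equiv0$. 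Applying the contrapositive to $\bar P$ (which is nonzero with the required per-variable degree bounds) finishes the proof.

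The one place demanding care --- and the step I would write out in full --- is the bookkeeping in stage one: termination of the reduction, the inequality $\deg\bar P\le\deg P$, and the invariance of the coefficient at $x_1^{t_1}\cdots x_n^{t_n}$. All three hinge on $g_i$ being monic, equivalently on the hypothesis $t_i<|\S_i|$ that makes the distinguished monomial already reduced. Everything else is routine, and only the field axioms are used --- indeed stage two needs merely that $F$ be an integral domain.
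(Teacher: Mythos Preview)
Your argument is the standard and correct proof of Alon's nonvanishing criterion: reduce $P$ modulo the polynomials $g_i(x_i)=\prod_{s\in\S_i}(x_i-s)$ to obtain $\bar P$ with $\deg_{x_i}\bar P<|\S_i|$, observe that the coefficient of $x_1^{t_1}\cdots x_n^{t_n}$ survives because that monomial is already reduced and lies in the top total degree, and then invoke the elementary fact (proved by induction on $n$) that a nonzero polynomial with such per-variable degree bounds cannot vanish on the whole grid. The bookkeeping you flag---termination, $\deg\bar P\le\deg P$, and invariance of the distinguished coefficient---is handled correctly by your monovariant on total degree.

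There is nothing to compare against here: the paper does not supply its own proof of this lemma but simply quotes it from Alon's paper~\cite{CN} (Lemmas~1.1 and~1.2 there). What you have written is essentially Alon's original argument, so it is entirely appropriate as a proof for this statement.
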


If we want to use the combinatorial Nullstellensatz to solve the
grasshopper problem, it seems promising to choose
$\S_1=\dots=\S_n=\{a_1,\dots,a_n\}$ and the polynomial
\begin{equation}
  \label{eq:vp}
  P(x_1,\dots,x_n) =
  V(x_1,\dots,x_n) \cdot \prod_{\ell=1}^{n-1} \prod_{m\in \M} \big(
  (x_1+\dots+x_\ell) - m \big)
\end{equation}
where
$$
V(x_1,\dots,x_n) = \prod_{1\le i<j\le n} (x_j-x_i) =
\sum_{\pi\in\Sym{n}} \sgn(\pi) \cdot x_{\pi(2)} x_{\pi(3)}^2 \cdots x_{\pi(n)}^{n-1}
$$
is the so-called Vandermonde polynomial (see, for
example,~\cite[pp. 346--347]{AignerBook}), the symbol $\Sym{n}$
denotes the group of the permutations of the sequence $(1,2,\dots,n)$,
and $\sgn(\pi)$ denotes the sign of the permutation~$\pi$.

If there exist some values $s_1,\dots,s_n\in\{a_1,\dots,a_n\}$ such
that $P(s_1,\dots,s_n)\ne0$, then it follows that the grasshopper can
choose the jumps $s_1,\dots,s_n$. The value of the Vandermonde
polynomial is nonzero if and only if the jumps $s_1,\dots,s_n$ are
distinct, i.e., $(s_1,\dots,s_n)$ is a permutation of
$(a_1,\dots,a_n)$, and a nonzero value of the double product on the
right-hand side of \eqref{eq:vp} indicates that this order of jumps
provides a safe route to the point~$s$.
(Similar polynomials appear in various applications of the
combinatorial Nullstellensatz; see, for example,~\cite{Snevily1} or
~\cite{Snevily2}.)

The first difficulty we can recognize is that the degree is too high.
To apply the combinatorial Nullstellensatz to the sets
$\S_1=\dots=\S_n=\{a_1,\dots,a_n\}$, we need $\deg P=t_1+\dots+t_n\le
n(n-1)$, while the degree of our polynomial $P(x_1,\dots,x_n)$ is
$\frac12(3n-2)(n-1)$. But there is another, less obtrusive deficiency:
this approach does not use the condition that the numbers
$a_1,\dots,a_n$ are positive. We may ask: is this condition really
important? What happens if this condition is omitted?  In this paper
we answer these questions.

% - - - - - - - - - - - - - - - - - - - - - - - - - - - - - - - - - - - - -

\subsection{Modified problem with signed jumps}

We consider a modified problem in which the numbers $a_1,\dots,a_n$
are allowed to be negative as well as positive.  A positive value
means that the grasshopper jumps to the right, while a negative value
means a jump to the left. We determine the minimal size, in terms of
$n$, of a set $\M$ that blocks the grasshopper.

Optionally we may allow or prohibit the value $0$ among the numbers
$a_1,\dots,a_n$; such a case means that the grasshopper ``hops'' to
the same position. This option does not make a significant difference,
but affects the maximal size of the set~$\M$.

\medskip

In order to find a sharp answer, let us collect a few simple cases
when the grasshopper gets blocked. Table~\ref{tab:examples} shows
three such examples. If $n$ is odd, the size of the set $\M$ is
different if hops are allowed or prohibited.

\begin{table}[th!]
  \begin{center}
    \begin{tabular}{|l|l|l|}
      \hline Case & $\{a_1,a_2,\dots,a_n\}$ & $\M$ \\ \hline
      $n=2k$   & $\{-k+1,\dots,k+1\}\setminus\{0\}$ & $\{1,\dots,k+1\}$ \\
      \hline
      $n=2k+1$; ``hops'' allowed &
      $\{-k+1, \dots,k+1\}$ & $\{1,\dots,k+1\}$ \\ \hline
      $n=2k+1$; ``hops'' prohibited &
      $\{-k+1,\dots,k+2\}\setminus\{0\}$ & $\{1,\dots,k+2\}$ \\ \hline
    \end{tabular}
  \end{center}
  \caption{Simple cases when the grasshopper is blocked.
    \label{tab:examples}}
\end{table}

In Section~\ref{Sect:main} we show that these examples are minimal: if
the size of the set $\M$ is smaller than in the examples above then
the desired order of jumps exists, as stated in the next theorem,
which is our main result in this paper.
\begin{Thm}
  \label{thm:tn}
  Suppose that $a_1,a_2,\dots,a_n$ are distinct integers and $\M$ is a
  set of integers with
  $\displaystyle|\M|\le\left\lfloor\frac{n}{2}\right\rfloor$. Then
  there exists a permutation $(b_1,\dots,b_n)$ of the sequence
  $(a_1,\dots,a_n)$ such that none of the sums $b_1$, $b_1+b_2$,
  \dots, $b_1+b_2+\dots+b_{n-1}$ is an element of~$\M$.

  If the numbers $a_1,a_2,\dots,a_n$ are all nonzero, then the same
  holds for $\displaystyle|\M|\le\left\lfloor\frac{n+1}{2}\right\rfloor$ as well.
\end{Thm}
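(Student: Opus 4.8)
The plan is to revive the polynomial method outlined in the introduction, but to repair the degree mismatch. The key realization is that we should not use the full Vandermonde polynomial $V$. Instead, since the condition $|\M|\le\lfloor n/2\rfloor$ is so much weaker than $n-1$, we can afford to discard many of the factors $(x_j-x_i)$ and keep only enough of them to force $(s_1,\dots,s_n)$ to be a permutation ``for free'' via a counting/support argument, while the remaining degree budget is spent on the mine factors $\prod_{\ell}\prod_{m\in\M}\big((x_1+\dots+x_\ell)-m\big)$. Concretely, I would take $\S_1=\dots=\S_n=\{a_1,\dots,a_n\}$, so $|\S_i|=n$ and the admissible exponents are $t_i\le n-1$, giving a total degree budget of $n(n-1)$. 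The mine product has degree $(n-1)|\M|\le(n-1)\lfloor n/2\rfloor$, which leaves room for a polynomial of degree roughly $n(n-1)-(n-1)\lfloor n/2\rfloor=(n-1)\lceil n/2\rceil$ in place of the Vandermonde factor. I would therefore look for a replacement polynomial $W(x_1,\dots,x_n)$ of that degree, vanishing whenever two of the $x_i$ coincide (so that $P=W\cdot\prod\prod(\cdots)$ detects only genuine permutations), and with a controllable leading coefficient.

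The central step, and the main obstacle, is to pin down the coefficient of a suitable monomial $x_1^{t_1}\cdots x_n^{t_n}$ in $P=W\cdot Q$, where $Q=\prod_{\ell=1}^{n-1}\prod_{m\in\M}\big((x_1+\dots+x_\ell)-m\big)$, and to show it is nonzero. The natural monomial to target is the one of top degree in $Q$: the highest-degree part of $Q$ is $\prod_{\ell=1}^{n-1}(x_1+\dots+x_\ell)^{|\M|}$, whose monomials I can expand multinomially; the variable $x_n$ appears only in the last factor, $x_1$ appears in all $n-1$ factors, etc. I want to choose $W$ so that when its leading form is multiplied by this expansion, exactly one monomial $x_1^{t_1}\cdots x_n^{t_n}$ with all $t_i\le n-1$ survives with nonzero coefficient, and — crucially — no lower-order terms of $Q$ can contribute to the \emph{same} monomial (they cannot, because they have strictly smaller total degree, while $\deg W+\deg(\text{top part of }Q)$ already hits the target degree $t_1+\dots+t_n$). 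So the whole difficulty collapses to a purely algebraic coefficient computation in the product of $W$'s leading form with $\prod_{\ell=1}^{n-1}(x_1+\dots+x_\ell)^{|\M|}$. I expect the right choice of $W$ to be a determinant-type or Schur-like polynomial tailored to make this coefficient a ratio of factorials (hence manifestly nonzero over a field of characteristic $0$, e.g.\ $\mathbb{Q}$), and verifying that one coefficient is the crux of the argument.

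Once the nonvanishing monomial is identified, the rest is bookkeeping. By Lemma~\ref{thm:CN} (with $F=\mathbb{Q}$, which contains all the integers involved) there exist $s_1,\dots,s_n\in\{a_1,\dots,a_n\}$ with $P(s_1,\dots,s_n)\ne0$. Since $W$ vanishes whenever two coordinates agree, the tuple $(s_1,\dots,s_n)$ is a permutation $(b_1,\dots,b_n)$ of $(a_1,\dots,a_n)$; since the mine product is nonzero, $b_1+\dots+b_\ell\ne m$ for every $\ell\le n-1$ and every $m\in\M$, which is exactly the conclusion. For the stronger bound $|\M|\le\lfloor(n+1)/2\rfloor$ when all $a_i\ne0$, I would argue separately: here the extra hypothesis $0\notin\{a_i\}$ should let me absorb one more mine factor. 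One clean way is to note that with all $a_i$ nonzero we have freedom to also constrain $b_1+\dots+b_n=s$ to avoid an extra value — or, more directly, to run the same Nullstellensatz computation with the mine factor for the full sum $(x_1+\dots+x_n)$ included, i.e.\ extend the double product to $\ell=1,\dots,n$ on a single extra mine, using $0\notin\{a_i\}$ to guarantee that the needed monomial's coefficient stays nonzero. Isolating exactly which monomial to chase in each of the two cases, and checking the factorial/determinant coefficient is nonzero, is the real work; everything else is the standard Nullstellensatz wrapper.
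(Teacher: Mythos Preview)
Your Nullstellensatz framework is the paper's own alternative route (Section~\ref{sect:altCN}). For even $n=2k$ with $|\M|=k$ your leftover degree $(n-1)\lceil n/2\rceil=\binom{2k}{2}$ equals $\deg V$ exactly, and since any polynomial vanishing on every diagonal $x_i=x_j$ is a multiple of $V$, your ``replacement'' $W$ is forced to be a scalar multiple of $V$ itself; the coefficient to be checked --- that of $x_1^{2k-1}\cdots x_{2k}^{2k-1}$ in $V\cdot\prod_{\ell=1}^{2k-1}(x_1+\cdots+x_\ell)^k$ --- is precisely the constant $c_k$ of Section~\ref{sect:polys}. The genuine gap is your expectation that $c_k$ is ``a ratio of factorials (hence manifestly nonzero).'' It is not: $c_4$ is divisible by $97$ and $c_6$ by the prime $1\,002\,820\,739$ (see Table~\ref{tab:ck} and the closing remarks), and no closed form is known. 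Proving $c_k>0$ is the entire substance of the theorem; the paper does it via Lemmas~\ref{thm:lem1}--\ref{thm:lem4}, embedding $c_k=\alpha^{(2k,k,0)}_{2k-1,2k-2,\dots,0}$ in a two-parameter family of coefficients $\alpha^{(n,u,v)}_{d_1,\dots,d_n}$, showing by induction that all of them are nonnegative, and then establishing strict positivity under a constraint on the exponent sequence. Without that positivity argument your wrapper is empty.

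Your treatment of the second statement also does not work as written. For odd $n=2k+1$ with $|\M|=k+1$ the mine product already has degree $2k(k+1)$, leaving only $2k^2<\deg V=k(2k+1)$ for $W$; hence no polynomial vanishing on all diagonals fits the budget, and the direct Nullstellensatz attack cannot even be set up. (Extending the product to $\ell=n$ does nothing useful either: $b_1+\cdots+b_n$ is independent of the permutation, so that factor is a constant on the orbit and does not interact with the hypothesis $0\notin\{a_i\}$.) The paper sidesteps this by reducing odd $n$ to the already-proved even case, appending the jump $0$ when all $a_i$ are nonzero and deleting it when one of them is zero (Section~\ref{sect:oddN}).
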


Notice that in the modified problem the set $\M$ contains fewer mines
than in the statement in the Olympiad problem. This resolves the
difficulties about the degree of the polynomial, allowing us to use
the polynomial method.

%==========================================================================

\bigskip

\section{Solution to the modified problem}
\label{Sect:main}

In Sections~\ref{sect:polys} and~\ref{sect:ck} we prove
Theorem~\ref{thm:tn} in the particular case when $n$ is even. For the
proof we use the polynomial method. The approach we follow does not
apply the combinatorial Nullstellensatz directly, but there is a close
connection to it; the connection between the two methods is discussed
in Section~\ref{sect:altCN}. Finally, Theorem~\ref{thm:tn} is proved
for odd values of~$n$ in Section~\ref{sect:oddN}.

%--------------------------------------------------------------------------

\subsection{Setting up the polynomials when $n$ is even}
\label{sect:polys}

Throughout Sections~\ref{sect:polys}--\ref{sect:altCN} we assume that
$n=2k$. Since we can add arbitrary extra elements to the set $\M$, we
can also assume $|\M|=k$ without loss of generality.

\medskip

Define the polynomial
\begin{equation}
  \label{eq:pol}
  Q(x_1,\dots,x_{2k}) = \sum_{\pi\in\Sym{2k}} \sgn(\pi)
  \prod_{\ell=1}^{2k-1} \prod_{m\in \M}
  \big((x_{\pi(1)}+x_{\pi(2)}+\dots+x_{\pi(\ell)})-m\big)
\end{equation}
and consider the number $Q(a_1,\dots,a_{2k})$.  If
$Q(a_1,\dots,a_{2k})\ne0$ then there is a permutation
$\pi\in\Sym{2k}$ for which
$$
\prod_{\ell=1}^{2k-1} \prod_{m\in \M}
\big((a_{\pi(1)}+a_{\pi(2)}+\dots+a_{\pi(\ell)})-m\big) \ne 0.
$$
This relation holds if and only if the numbers
$$
a_{\pi(1)},
\quad
a_{\pi(1)}+a_{\pi(2)},
\quad \dots, \quad
a_{\pi(1)}+a_{\pi(2)}+\dots+a_{\pi(2k-1)}
$$
all differ from the elements of~$\M$. Hence, it is sufficient to prove
$Q(a_1,\dots,a_{2k})\ne0$.

\medskip

Since $Q$ is an alternating polynomial, it is a multiple of the
Vandermonde polynomial $V(x_1,\dots,x_{2k})$ (we refer again
to~\cite[pp. 346--347]{AignerBook}). The degree of $Q$ is at most
$k(2k-1)$, which matches the degree of $V$, so
\begin{equation}
  \label{eq:nomine}
  Q(x_1,x_2,\dots,x_{2k}) =
  (-1)^k c_k \cdot V(x_1,x_2,\dots,x_{2k})
\end{equation}
for some constant $c_k$. (The sign $(-1)^k$ on the right-hand side is
inserted for convenience, to make the coefficient of
$x_1^{2k-1}x_2^{2k-2}\cdots x_{2k-1}$ positive.) Substituting
$a_1,\dots,a_{2k}$, we obtain
$$
Q(a_1,a_2,\dots,a_{2k}) =
(-1)^k c_k \cdot V(a_1,a_2,\dots,a_{2k}).
$$
Since $a_1,\dots,a_{2k}$ are distinct,
$V(a_1,a_2,\dots,a_{2k})\ne0$. So it is left to prove $c_{k}\ne0$.

\medskip

The equation \eqref{eq:nomine} shows that the polynomial $Q$ and the
constant $c_k$ do not depend on the set $\M$; the mines are
canceled out. Therefore we get the same polynomial if all mines are
replaced by $0$:
$$
Q(x_1,\dots,x_{2k}) =
\sum_{\pi\in\Sym{2k}} \sgn(\pi) \prod_{\ell=1}^{2k-1}
\big(x_{\pi(1)}+\cdots+x_{\pi(\ell)}\big)^k =
(-1)^k c_k \cdot V(x_1,\dots,x_{2k}).
$$
Hence, the constant $c_k$ depends only on the value of~$k$.

\medskip

Due to the complexity of the polynomial $Q$, it is not straightforward to
access the constant $c_k$.  The first values are listed in Table~\ref{tab:ck}.
\begin{table}[th!]
  \begin{center}
    \begin{tabular}{ll}
      $c_1=1$                     & $c_6 = 7\,886\,133\,184\,567\,796\,056\,800$ \\
      $c_2=2$                     & $c_7 \approx 8.587\cdot10^{34}$    \\
      $c_3=90$                    & $c_8 \approx 4.594\cdot10^{51}$    \\
      $c_4=586\,656$              & $c_9 \approx 2.060\cdot10^{72}$    \\
      $c_5=1\,915\,103\,977\,500$ & $c_{10} \approx 1.237\cdot10^{97}$ \\
    \end{tabular}
  \end{center}
  \caption{Values of $c_k$ for small~$k$.
    \label{tab:ck}}
\end{table}

%--------------------------------------------------------------------------

\subsection{Proof of $c_k\ne0$}
\label{sect:ck}

We prove a more general statement which contains $c_k>0$ as a special
case.

% - - - - - - - - - - - - - - - - - - -

\begin{Def}
  For every $n\ge1$ and pair $u,v\ge0$ of integers, define the polynomial
  \begin{equation}
    \label{eq:L1}
    Q^{(n,u,v)}(x_1,\dots,x_n) =
    \sum_{\pi\in\Sym{n}} \sgn(\pi) \left( \prod_{\ell=1}^{n-1}
    \big(x_{\pi(1)}+\dots+x_{\pi(\ell)}\big)^u \right)
    (x_1+\dots+x_n)^v.
  \end{equation}
  In the case $n=1$, when the product
  $\prod_{\ell=1}^{n-1}(\dots)^u$ is empty, let
  $Q^{(1,u,v)}(x_1) = x_1^v$.

  For every sequence $d_1\ge\dots\ge d_n\ge0$ of integers, denote by
  $\alpha^{(n,u,v)}_{d_1,\dots,d_n}$ the coefficient of the monomial
  $x_1^{d_1} \cdots x_n^{d_n}$ in $Q^{(n,u,v)}$.  For convenience, define
  $\alpha^{(n,u,v)}_{d_1,\dots,d_n}=0$ for $d_n<0$ as well.
\end{Def}

Since the polynomial $Q^{(n,u,v)}$ is alternating, we have
$\alpha^{(n,u,v)}_{d_1,\dots,d_n}=0$ whenever $d_i=d_{i+1}$ for some
index $1\le i<n$, and we can write
\begin{equation}
  \label{eq:L2}
  Q^{(n,u,v)}(x_1,\dots,x_n) =
  \sum_{d_1>\dots>d_n\ge0} \alpha^{(n,u,v)}_{d_1,\dots,d_n}
  \sum_{\pi\in\Sym{n}} \sgn(\pi) \,
  x_{\pi(1)}^{d_1} x_{\pi(2)}^{d_2} \cdots x_{\pi(n)}^{d_n}.
\end{equation}

% - - - - - - - - - - - - - - - - - - -

\begin{Lem}
  \label{thm:lem1}
  If $n\ge2$ and $d_1>\dots>d_n\ge 0$ then
  $$
  \alpha^{(n,u,0)}_{d_1,\dots,d_n} =
  \begin{cases}
    \alpha^{(n-1,u,u)}_{d_1,\dots,d_{n-1}} & \text{if $d_n=0$;} \\
    0 & \text{if $d_n>0$.}
  \end{cases}
  $$
\end{Lem}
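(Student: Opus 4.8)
The plan is to use that for $v=0$ the trailing factor $(x_1+\dots+x_n)^v$ equals $1$, so that
$$
Q^{(n,u,0)}(x_1,\dots,x_n)=\sum_{\pi\in\Sym{n}}\sgn(\pi)\,P_\pi,\qquad
P_\pi:=\prod_{\ell=1}^{n-1}\bigl(x_{\pi(1)}+\dots+x_{\pi(\ell)}\bigr)^u,
$$
and to exploit the following structural observation: \emph{no factor of $P_\pi$ involves the variable $x_{\pi(n)}$}, since every factor is a partial sum $x_{\pi(1)}+\dots+x_{\pi(\ell)}$ with $\ell\le n-1$. Hence, in the expansion of $P_\pi$ into monomials, the exponent of $x_{\pi(n)}$ is always $0$. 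Since $\alpha^{(n,u,0)}_{d_1,\dots,d_n}$ is the coefficient of $x_1^{d_1}\cdots x_n^{d_n}$ in $Q^{(n,u,0)}$, it equals the signed sum over $\pi\in\Sym{n}$ of the coefficients of this monomial in the polynomials $P_\pi$; and the $\pi$-term vanishes unless the exponent of $x_{\pi(n)}$ in the target monomial, namely $d_{\pi(n)}$, is $0$. As $d_1>\dots>d_n\ge0$, the only index $i$ for which $d_i=0$ is possible is $i=n$, and only when $d_n=0$.

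Thus, if $d_n>0$ then $d_{\pi(n)}>0$ for every $\pi$, every term vanishes, and $\alpha^{(n,u,0)}_{d_1,\dots,d_n}=0$. If $d_n=0$, only the permutations with $\pi(n)=n$ contribute; I would parametrise such a $\pi$ by its restriction $\sigma\in\Sym{n-1}$ to $(1,\dots,n-1)$, so that $\sgn(\pi)=\sgn(\sigma)$ and, using $\pi(\ell)=\sigma(\ell)$ for $\ell\le n-1$ together with $x_{\sigma(1)}+\dots+x_{\sigma(n-1)}=x_1+\dots+x_{n-1}$,
$$
P_\pi=\left(\prod_{\ell=1}^{n-2}\bigl(x_{\sigma(1)}+\dots+x_{\sigma(\ell)}\bigr)^u\right)\bigl(x_1+\dots+x_{n-1}\bigr)^u.
$$
Summing over $\sigma$ with signs then gives $\sum_{\pi:\,\pi(n)=n}\sgn(\pi)\,P_\pi=Q^{(n-1,u,u)}(x_1,\dots,x_{n-1})$, the last factor above playing the role of the $v=u$ power of the total sum in the definition of $Q^{(n-1,u,u)}$. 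Extracting the coefficient of $x_1^{d_1}\cdots x_{n-1}^{d_{n-1}}$ --- legitimate since $d_1>\dots>d_{n-1}>0$ --- yields $\alpha^{(n,u,0)}_{d_1,\dots,d_n}=\alpha^{(n-1,u,u)}_{d_1,\dots,d_{n-1}}$.

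The argument is essentially bookkeeping, and the one spot that deserves a moment of care is this last identification: verifying that under the bijection $\pi\leftrightarrow\sigma$ the block $\prod_{\ell=1}^{n-1}(\cdots)^u$ on $n$ variables splits off precisely the factor $(x_1+\dots+x_{n-1})^u$ and leaves $\prod_{\ell=1}^{n-2}(\cdots)^u$, so as to match the $(u,u)$-shape of $Q^{(n-1,u,u)}$ (with the convention for $n-1=1$ giving $Q^{(1,u,u)}(x_1)=x_1^u$, consistent with the empty product $\prod_{\ell=1}^{0}$). Beyond that I foresee no genuine obstacle.
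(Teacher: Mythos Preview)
Your argument is correct and follows essentially the same route as the paper: both observe that $P_\pi$ omits the variable $x_{\pi(n)}$, so only permutations with $\pi(n)=n$ can contribute to the coefficient of $x_1^{d_1}\cdots x_n^{d_n}$, and only when $d_n=0$; summing over those permutations yields $Q^{(n-1,u,u)}(x_1,\dots,x_{n-1})$. Your write-up is somewhat more explicit about the bijection $\pi\leftrightarrow\sigma$ and the splitting off of the factor $(x_1+\dots+x_{n-1})^u$, but the underlying idea is identical.
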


\begin{proof}
  Consider the polynomial
  $$
  Q^{(n,u,0)}(x_1,\dots,x_n)
  = \sum_{\pi\in\Sym{n}} \sgn(\pi) \prod_{\ell=1}^{n-1}
  \big(x_{\pi(1)}+\dots+x_{\pi(\ell)}\big)^u.
  $$
  The product $\prod_{\ell=1}^{n-1}
  \big(x_{\pi(1)}+\dots+x_{\pi(\ell)}\big)^u$ does not contain the
  variable $x_{\pi(n)}$. Therefore the term $x_{1}^{d_1}\cdots
  x_{n-1}^{d_{n-1}}x_{n}^{d_n}$ can occur only for $d_n=0$ and
  $\pi(n)=n$.  Summing over such permutations,
  $$
  \sum_{\pi\in\Sym{n},\,\pi(n)=n} \sgn(\pi) \prod_{\ell=1}^{n-1}
  \big(x_{\pi(1)}+\dots+x_{\pi(\ell)}\big)^u =
  Q^{(n-1,u,u)}(x_1,\dots,x_{n-1}).
  $$
  Hence, the coefficients of $x_1^{d_1}\cdots x_{n-1}^{d_{n-1}}$ in the
  polynomials $Q^{(n,u,0)}$ and $Q^{(n-1,u,u)}$ are the same.
\end{proof}

% - - - - - - - - - - - - - - - - - - -

\begin{Lem}
  \label{thm:lem2}
  If $v\ge1$ and $d_1>\dots>d_n\ge0$ then
  $$
  \alpha^{(n,u,v)}_{d_1,\dots,d_n} = \sum_{i=1}^n
  \alpha^{(n,u,v-1)}_{d_1,\dots,d_{i-1},d_i-1,d_{i+1},\dots,d_n}.
  $$
\end{Lem}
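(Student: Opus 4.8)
The plan is to express the factor $(x_1+\dots+x_n)^v$ that distinguishes $Q^{(n,u,v)}$ from $Q^{(n,u,v-1)}$ as $(x_1+\dots+x_n)\cdot(x_1+\dots+x_n)^{v-1}$, and to track what multiplication by the linear form $x_1+\dots+x_n$ does at the level of coefficients. Concretely, $Q^{(n,u,v)} = (x_1+\dots+x_n)\cdot Q^{(n,u,v-1)}$ directly from the definition \eqref{eq:L1}, since the linear factor does not involve the permutation $\pi$ and so commutes past the sum over $\Sym{n}$. Multiplying a polynomial by $x_1+\dots+x_n = \sum_{i=1}^n x_i$ replaces each monomial $x_1^{e_1}\cdots x_n^{e_n}$ by $\sum_{i=1}^n x_1^{e_1}\cdots x_i^{e_i+1}\cdots x_n^{e_n}$; hence the coefficient of a target monomial $x_1^{d_1}\cdots x_n^{d_n}$ in the product is the sum, over $i=1,\dots,n$, of the coefficient of $x_1^{d_1}\cdots x_i^{d_i-1}\cdots x_n^{d_n}$ in $Q^{(n,u,v-1)}$. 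That is exactly the asserted identity
$$
\alpha^{(n,u,v)}_{d_1,\dots,d_n} = \sum_{i=1}^n
\alpha^{(n,u,v-1)}_{d_1,\dots,d_{i-1},d_i-1,d_{i+1},\dots,d_n},
$$
with the convention $\alpha^{(n,u,v-1)}_{\dots}=0$ when any exponent drops below $0$ taking care of the boundary case $d_n=0$.

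The one point that needs a word of care is that $\alpha^{(n,u,v-1)}_{\dots}$ was defined in the excerpt only for \emph{strictly decreasing} exponent sequences, whereas the shifted tuples $(d_1,\dots,d_i-1,\dots,d_n)$ appearing on the right-hand side need not be strictly decreasing: if $d_i - 1 = d_{i+1}$ then two entries coincide. However, by the alternating property noted just after the definition (equation \eqref{eq:L2}), the coefficient of any monomial with two equal exponents in $Q^{(n,u,v-1)}$ is $0$, so extending $\alpha^{(n,u,v-1)}$ to be the literal monomial coefficient — zero on non-strictly-decreasing tuples — is consistent and is what the formula intends. I would state this extension explicitly (it is already half-anticipated by the convention $\alpha=0$ for $d_n<0$) and then the computation above goes through verbatim.

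There is no real obstacle here; the lemma is a bookkeeping identity. The only thing to be careful about is to match conventions so that every tuple on the right-hand side is a legitimate index for $\alpha^{(n,u,v-1)}$ — either strictly decreasing with $d_i - 1 \ge d_{i+1}$ and $d_n - 1 \ge 0$ (giving an honest coefficient), or else having a repeated entry or a negative entry (giving $0$ by the alternating property or by the stated convention). Once that is in place, the proof is the two-line observation that $Q^{(n,u,v)} = (x_1+\cdots+x_n)\,Q^{(n,u,v-1)}$ together with the elementary rule for how the coefficients of a polynomial transform under multiplication by $\sum_i x_i$.
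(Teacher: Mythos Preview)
Your argument is correct and is essentially identical to the paper's proof: both observe that $Q^{(n,u,v)} = (x_1+\cdots+x_n)\,Q^{(n,u,v-1)} = \sum_{i=1}^n x_i\,Q^{(n,u,v-1)}$ and then compare the coefficient of $x_1^{d_1}\cdots x_n^{d_n}$ on both sides. Your extra paragraph on the boundary cases is more careful than the paper (note, though, that the paper actually defines $\alpha$ for all \emph{weakly} decreasing nonnegative tuples, so since $d_i>d_{i+1}$ implies $d_i-1\ge d_{i+1}$, the shifted tuples are already covered by the definition).
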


\begin{proof}
  By the definition of the polynomial $Q^{(n,u,v)}$, we have
  \begin{eqnarray*}
    Q^{(n,u,v)}(x_1,\dots,x_n) &=&
    (x_1+\dots+x_n) \cdot Q^{(n,u,v-1)}(x_1,\dots,x_n) \\
    &=& \sum_{i=1}^n x_i \cdot Q^{(n,u,v-1)}(x_1,\dots,x_n) .
  \end{eqnarray*}
  On the left-hand side, the coefficient of $x_1^{d_1}\cdots x_{n}^{d_{n}}$ is
  $\alpha^{(n,u,v)}_{d_1,\dots,d_n}$. On the right-hand side, the coefficient
  of $x_1^{d_1}\cdots x_{n}^{d_{n}}$ in $x_i \cdot Q^{(n,u,v-1)}$ is
  $\alpha^{(n,u,v-1)}_{d_1,\dots,d_{i-1},d_i-1,d_{i+1},\dots,d_n}$.
\end{proof}

% - - - - - - - - - - - - - - - - - - -

\begin{Lem}
  \label{thm:lem3}
  If $n\ge1$, $u,v\ge0$, and $d_1>\dots>d_n\ge0$ then
  $$ \alpha^{(n,u,v)}_{d_1,\dots,d_n}\ge0 . $$
\end{Lem}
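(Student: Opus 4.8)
The plan is to argue by a double induction, the outer one on $n$ and, for each fixed $n$, an inner one on $v$; the exponent $u$ is merely carried along and is never an induction variable. The two recursions already established are exactly what makes this work: Lemma~\ref{thm:lem2} decreases $v$ while keeping $n$ fixed, and Lemma~\ref{thm:lem1} settles the base case $v=0$ by passing from $n$ to $n-1$ (with $u$ and $u$ in the second and third slots of the new symbol).

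First I would dispose of the outer base case $n=1$: there the definition gives $Q^{(1,u,v)}(x_1)=x_1^v$, so $\alpha^{(1,u,v)}_{d_1}$ equals $1$ when $d_1=v$ and $0$ otherwise, and is in any case nonnegative. For the outer step, assume the conclusion for $n-1$ and all $u,v\ge0$, fix $n$, and induct on $v$. If $v=0$, Lemma~\ref{thm:lem1} identifies $\alpha^{(n,u,0)}_{d_1,\dots,d_n}$ with $\alpha^{(n-1,u,u)}_{d_1,\dots,d_{n-1}}$ when $d_n=0$ and with $0$ when $d_n>0$; the former is nonnegative by the outer hypothesis, the latter trivially. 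If $v\ge1$, Lemma~\ref{thm:lem2} expresses $\alpha^{(n,u,v)}_{d_1,\dots,d_n}$ as the sum over $i=1,\dots,n$ of the coefficients $\alpha^{(n,u,v-1)}_{d_1,\dots,d_{i-1},d_i-1,d_{i+1},\dots,d_n}$, so it suffices to show each of these $n$ summands is nonnegative.

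The one genuinely delicate point, and the step I expect to be the main obstacle, is that lowering $d_i$ by $1$ may push the index tuple out of the range $d_1>\dots>d_n\ge0$ in which the inner hypothesis applies, so I would check the degenerate cases. Because the $d_i$ are integers with $d_{i-1}>d_i$, we get $d_{i-1}\ge d_i+1>d_i-1$, so no collision is ever created immediately to the left of position $i$; because $d_i>d_{i+1}$, we get $d_i-1\ge d_{i+1}$, so the only defect that can appear to the right is the equality $d_i-1=d_{i+1}$, and then the summand vanishes since $Q^{(n,u,v-1)}$ is alternating. If instead $i=n$ and $d_n=0$, then $d_i-1=-1<0$ and the summand is $0$ by the convention (stated in the Definition) that such a coefficient is $0$ whenever its last index is negative. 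In every remaining case the modified tuple is once more strictly decreasing and nonnegative, so the inner induction hypothesis (same $n$, smaller $v$) yields the corresponding $\alpha\ge0$; adding up $n$ nonnegative terms gives $\alpha^{(n,u,v)}_{d_1,\dots,d_n}\ge0$ and closes both inductions. Specializing to $(n,u,v)=(2k,k,0)$ and $(d_1,\dots,d_n)=(2k-1,2k-2,\dots,1,0)$ then recovers $c_k=\alpha^{(2k,k,0)}_{2k-1,\dots,1,0}\ge0$, the nonnegativity half of the desired $c_k>0$; upgrading this to strict positivity will require tracking a single strictly positive contribution through the same recursion.
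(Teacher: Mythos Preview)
Your proof is correct and follows essentially the same approach as the paper: both use the recursions of Lemmas~\ref{thm:lem1} and~\ref{thm:lem2} to reduce, with the paper packaging the argument as a single induction on $n(u+1)+v$ while you organize it as an outer induction on $n$ and an inner one on $v$. You also spell out the degenerate cases (equal exponents, $d_n-1<0$) that the paper leaves implicit; the final paragraph about $c_k$ is extraneous to the lemma but harmless.
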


\begin{proof}
  Apply induction on the value of $n(u+1)+v$.

  If $n=1$ then we have $Q^{(1,u,v)}(x_1) = x_1^v$, and the only
  nonzero coefficient is positive.

  Now suppose that $n\ge2$, and the induction hypothesis holds for all
  cases when $n(u+1)+v$ is smaller. Then the induction step is
  provided by Lemma~\ref{thm:lem1} for $v=0$, and by
  Lemma~\ref{thm:lem2} for $v\ge1$.
\end{proof}

% - - - - - - - - - - - - - - - - - - -

\begin{Lem}
  \label{thm:lem4}
  Suppose that $n\ge1$, $u\ge n/2$, $v\ge0$, and $d_1>\dots>d_n\ge0$
  are integers such that $d_1+\dots+d_n=(n-1)u+v$ and $d_n\le v$.
  Then
  $$ \alpha^{(n,u,v)}_{d_1,\dots,d_n}>0. $$
\end{Lem}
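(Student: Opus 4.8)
The plan is to prove the statement by induction on the quantity $n(u+1)+v$, exactly as in Lemma~\ref{thm:lem3}, with $n=1$ as the base case and the inductive step split according to whether $v=0$ or $v\ge1$. Note that the hypothesis $d_1+\dots+d_n=(n-1)u+v$ says precisely that $x_1^{d_1}\cdots x_n^{d_n}$ is a monomial of top degree in $Q^{(n,u,v)}$, so $\alpha^{(n,u,v)}_{d_1,\dots,d_n}$ is one of the ``leading'' coefficients, and the claim is that all such leading coefficients for which $d_n\le v$ are strictly positive. The base case $n=1$ is immediate: here $Q^{(1,u,v)}(x_1)=x_1^v$, the hypotheses force $d_1=v$, and $\alpha^{(1,u,v)}_v=1>0$.

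For the inductive step with $v=0$, the condition $d_n\le v$ forces $d_n=0$, so Lemma~\ref{thm:lem1} gives $\alpha^{(n,u,0)}_{d_1,\dots,d_n}=\alpha^{(n-1,u,u)}_{d_1,\dots,d_{n-1}}$, and I want to apply the induction hypothesis to $Q^{(n-1,u,u)}$ — a case with strictly smaller value of $n(u+1)+v$. Most of the required hypotheses transfer mechanically: $u\ge n/2>(n-1)/2$, the exponents $d_1,\dots,d_{n-1}$ still strictly decrease and are nonnegative, and $d_1+\dots+d_{n-1}=(n-1)u=(n-2)u+u$. The one genuinely nontrivial point is the new condition $d_{n-1}\le u$. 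To get it I will use that $d_i\ge d_{n-1}+(n-1-i)$ for $1\le i\le n-1$; summing yields $(n-1)u=\sum_{i=1}^{n-1}d_i\ge (n-1)d_{n-1}+\binom{n-1}{2}$, hence $d_{n-1}\le u-\tfrac{n-2}{2}\le u$. The induction hypothesis then gives $\alpha^{(n-1,u,u)}_{d_1,\dots,d_{n-1}}>0$.

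For the inductive step with $v\ge1$, Lemma~\ref{thm:lem2} writes $\alpha^{(n,u,v)}_{d_1,\dots,d_n}=\sum_{i=1}^n\alpha^{(n,u,v-1)}_{d_1,\dots,d_i-1,\dots,d_n}$, and every summand is $\ge0$ by Lemma~\ref{thm:lem3} (a summand whose exponent sequence has a repetition or a negative entry is simply $0$). So it suffices to exhibit one index $i$ such that decreasing $d_i$ by $1$ leaves a strictly decreasing nonnegative sequence, with last entry $\le v-1$ and sum $(n-1)u+(v-1)$; the induction hypothesis then makes that summand strictly positive while the rest stay nonnegative. If $d_n\ge1$, I take $i=n$: the new last entry is $d_n-1\le v-1$ and the ordering is preserved. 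If $d_n=0$, then because $v\ge1$ and $u\ge n/2$ we have $\sum d_i=(n-1)u+v\ge\binom{n}{2}+1$, which exceeds the value $\binom{n}{2}$ attained by the only strictly decreasing sequence ending in $0$ with all consecutive gaps equal to $1$; hence $d_i-d_{i+1}\ge2$ for some $i<n$, and decrementing that $d_i$ keeps the sequence strictly decreasing, nonnegative, and still ending in $0\le v-1$. In either case the sum is positive, which closes the induction.

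I expect the main obstacle to be the bookkeeping in the $v=0$ case: one must check that the sole nontrivial hypothesis of Lemma~\ref{thm:lem4}, namely $d_n\le v$, survives the reduction to parameters $(n-1,u,u)$, where it turns into $d_{n-1}\le u$. This is exactly the step where the assumption $u\ge n/2$ is genuinely needed (it also reappears, in a lighter role, in the gap-counting argument of the $v\ge1$ case). Everything else is the routine propagation of the degree and ordering conditions through Lemmas~\ref{thm:lem1}--\ref{thm:lem3}.
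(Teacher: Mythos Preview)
Your proof is correct and follows essentially the same induction on $n(u+1)+v$ as the paper, with the same case split on $v=0$ versus $v\ge1$ and the same use of Lemmas~\ref{thm:lem1}--\ref{thm:lem3}. The only cosmetic differences are that the paper bounds $d_{n-1}\le u$ via the average $d_{n-1}\le\frac{d_1+\dots+d_{n-1}}{n-1}=u$ rather than your staircase estimate, and in the $v\ge1$ case it selects the index to decrement uniformly as the largest $i_0$ with $d_{i_0}>n-i_0$ instead of splitting on whether $d_n=0$.
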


\begin{proof}
  Apply induction on the value of $n(u+1)+v$.

  In case $n=1$ the condition $d_1+\dots+d_n=(n-1)u+v$ reduces to
  $d_1=v$. Then we have $Q^{(1,u,v)}(x_1)=x_1^v$ and thus
  $\alpha^{(n,u,v)}_{d_1}=1>0$.

  \medskip

  Now suppose that $n\ge2$ and the induction hypothesis holds for all
  cases when $n(u+1)+v$ is smaller.  We consider two cases, depending on
  the value of~$v$.

  \medskip\noindent\textit{Case 1: $v=0$.}
  From the condition $d_n\le v$ we get $d_n=0$. Then, by
  Lemma~\ref{thm:lem1},
  $$
  \alpha^{(n,u,0)}_{d_1,\dots,d_{n-1},0} = \alpha^{(n-1,u,u)}_{d_1,\dots,d_{n-1}}.
  $$

  Apply the induction hypothesis to $(n',u',v')=(n-1,u,u)$ and
  $(d'_1,\dots,d'_{n-1})=(d_1,\dots,d_{n-1})$.  The condition $u'\ge
  n'/2$ is satisfied, because $u'=u\ge n/2 > n'/2$.
  Since $v=d_n=0$, we have
  $$
  d'_1+d'_2+\dots+d'_{n'} = d_1+d_2+\dots+d_{n-1}+d_n
  = (n-1)u = (n'-1)u' + v'.
  $$

  \noindent
  Finally,
  $$
  d'_{n'} = d_{n-1} \le \frac{d_1+d_2+\dots+d_{n-1}}{n-1} = u = v'.
  $$

  Hence, the numbers $n',u',v'$, and $d'_1,\dots,d'_{n-1}$ satisfy the
  conditions of the lemma. By the induction hypothesis and
  Lemma~\ref{thm:lem1} we can conclude
  $$
  \alpha^{(n,u,v)}_{d_1,\dots,d_{n-1},d_n} =
  \alpha^{(n,u,0)}_{d_1,\dots,d_{n-1},0} =
  \alpha^{(n-1,u,u)}_{d_1,\dots,d_{n-1}}>0.
  $$

  % - - - - - - - - - - - - - - - - - - -

  \medskip\noindent\textit{Case 2: $v>0$.}
  By Lemma~\ref{thm:lem2}, we have
  $$
  \alpha^{(n,u,v)}_{d_1,\dots,d_n} =
  \sum_{i=1}^n \alpha^{(n,u,v-1)}_{d_1,\dots,d_{i-1},d_i-1,d_{i+1},\dots,d_n}.
  $$
  By Lemma~\ref{thm:lem3}, all terms are nonnegative on the right-hand
  side. We show that there is at least one positive term among them.

  Since
  $$
  d_1+\dots+d_n = (n-1)u+v \ge (n-1)\cdot\frac{n}2+1 > (n-1)+(n-2)+\dots+1+0,
  $$
  there exists an index $i$, $1\le i\le n$, for which $d_i>n-i$. Let
  $i_0$ be the largest such index. If $i_0<n$ then
  $d_{i_0}-1>d_{i_0+1}$. Otherwise, if $i_0=n$, we have $d_n-1\ge0$.

  Now apply the induction hypothesis to $(n',u',v')=(n,u,v-1)$ and
  $(d'_1,\dots,d'_n)=(d_1,\dots,d_{i_0-1},d_{i_0}-1,d_{i_0+1},\dots,d_n)$.
  By the choice of $i_0$ we have $d'_1>\dots>d'_n\ge0$.  The condition
  $u'\ge n'/2$ obviously satisfied.  Finally, $d'_n\le v$ holds,
  too, due to $d'_n=\max(d_n-1,0) \le v-1$.

  Hence, the induction hypothesis can be applied and we conclude
  $$ \alpha^{(n,u,v)}_{d_1,\dots,d_n} \ge
  \alpha^{(n,u,v-1)}_{d_1,\dots,d_{i_0-1},d_{i_0}-1,d_{i_0+1},\dots,d_n}
  > 0.
  $$
\end{proof}

% - - - - - - - - - - - - - - - - - - -

\begin{Cor}
  \label{thm:ckpositive}
  $c_k>0$ for every positive integer $k$.
\end{Cor}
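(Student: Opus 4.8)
The plan is to recognize the constant $c_k$ as one particular coefficient $\alpha^{(n,u,v)}_{d_1,\dots,d_n}$ from the Definition above --- namely the one for $(n,u,v)=(2k,k,0)$ and the staircase exponent vector --- and then to read off its positivity from Lemma~\ref{thm:lem4}.

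First I would note that, by \eqref{eq:nomine}, the polynomial $Q$ does not depend on $\M$; setting every mine to $0$ in \eqref{eq:pol} turns it into $Q^{(2k,k,0)}$ in the notation of \eqref{eq:L1} (with $n=2k$, $u=k$, $v=0$, so the trailing factor $(x_1+\dots+x_n)^v$ equals $1$), and again by \eqref{eq:nomine} this polynomial equals $(-1)^k c_k\,V(x_1,\dots,x_{2k})$. I would then compute the coefficient of the single monomial $x_1^{2k-1}x_2^{2k-2}\cdots x_{2k-1}^{1}x_{2k}^{0}$ in two ways. On the one hand, in $(-1)^k c_k\,V$ this coefficient is $(-1)^k c_k$ times the coefficient of the same monomial in $V$; the latter is the sign of the order-reversing permutation of $(1,\dots,2k)$, i.e.\ $(-1)^{\binom{2k}{2}}=(-1)^{k(2k-1)}=(-1)^k$ since $2k-1$ is odd, so the two signs cancel and the coefficient is $c_k$. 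On the other hand, $Q^{(2k,k,0)}$ is homogeneous of degree $k(2k-1)$, and since $k(2k-1)=(2k-1)+(2k-2)+\dots+1+0$ is the minimal sum of $2k$ distinct nonnegative integers, the sum in \eqref{eq:L2} collapses to the single staircase term $(d_1,\dots,d_{2k})=(2k-1,2k-2,\dots,1,0)$, whose $\pi=\mathrm{id}$ summand shows that the coefficient of $x_1^{2k-1}x_2^{2k-2}\cdots x_{2k-1}^{1}x_{2k}^{0}$ in $Q^{(2k,k,0)}$ is $\alpha^{(2k,k,0)}_{2k-1,2k-2,\dots,1,0}$. Comparing the two computations,
$$
c_k = \alpha^{(2k,k,0)}_{2k-1,2k-2,\dots,1,0}.
$$

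Finally I would apply Lemma~\ref{thm:lem4} with $n=2k$, $u=k$, $v=0$, and $(d_1,\dots,d_n)=(2k-1,2k-2,\dots,1,0)$. All the hypotheses hold trivially: $u=k\ge n/2$ (with equality), $d_1>\dots>d_n\ge0$, $d_1+\dots+d_n=k(2k-1)=(n-1)u+v$, and $d_n=0\le0=v$. Lemma~\ref{thm:lem4} therefore gives $\alpha^{(2k,k,0)}_{2k-1,\dots,1,0}>0$, that is, $c_k>0$.

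The one place calling for care is the sign bookkeeping in the identification of $c_k$: one must check that the factor $(-1)^k$ in \eqref{eq:nomine} is exactly cancelled by the sign $(-1)^{\binom{2k}{2}}$ of the reversal permutation, so that $c_k$ comes out equal to the coefficient $\alpha^{(2k,k,0)}_{2k-1,\dots,1,0}$ and not to its negative (a reassuring consistency check: by Lemma~\ref{thm:lem3} that coefficient is automatically nonnegative, which is consistent with the positivity we are after). Beyond this, the corollary is an immediate substitution into Lemma~\ref{thm:lem4}.
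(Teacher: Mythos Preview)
Your proof is correct and follows exactly the paper's approach: identify $c_k$ with $\alpha^{(2k,k,0)}_{2k-1,2k-2,\dots,1,0}$ and then invoke Lemma~\ref{thm:lem4} with the staircase exponent vector. The paper's proof is terser because the identification $c_k=\alpha^{(2k,k,0)}_{2k-1,\dots,1,0}$ is already implicit in the parenthetical remark after~\eqref{eq:nomine} (the $(-1)^k$ was inserted precisely so that this coefficient equals $c_k$), whereas you spell out the sign bookkeeping explicitly; your extra observation that the sum in~\eqref{eq:L2} collapses to a single term is true but not needed, since extracting the coefficient of the single monomial $x_1^{2k-1}\cdots x_{2k}^0$ already isolates the $\pi=\mathrm{id}$ summand of the staircase term regardless.
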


\begin{proof}
  Apply Lemma~\ref{thm:lem4} with $n=2k$, $u=k$, $v=0$, and
  $(d_1,\dots,d_{2k})=(2k-1,2k-2,\dots,1,0)$. The conditions of the
  lemma are satisfied, so
  $$ c_k = \alpha^{(2k,k,0)}_{2k-1,2k-2,\dots,1,0} > 0. $$
\end{proof}

Corollary~\ref{thm:ckpositive} completes the proof of
Theorem~\ref{thm:tn} when $n$ is even.

%--------------------------------------------------------------------------

\subsection{An alternative approach using the combinatorial
  Nullstellensatz}
\label{sect:altCN}

It is natural to ask whether the combinatorial Nullstellensatz is
applicable to solve the modified problem, as was mentioned in the
introduction. As in Section~\ref{sect:imo}, let
$\S_1=\dots=\S_{2k}=\{a_1,\dots,a_{2k}\}$, $t_1=\dots=t_{2k}=2k-1$, and
consider the polynomial
$$
P(x_1,\dots,x_{2k}) =
V(x_1,x_2,\dots,x_{2k}) \cdot
\prod_{\ell=1}^{2k-1} \prod_{m\in \M}
\big((x_1+x_2+\dots+x_\ell)-m\big).
$$
The total degree of $P$ is $\binom{2k}{2}+(2k-1)k=2k(2k-1)$. To apply
the nonvanishing lemma, it is sufficient to prove that the coefficient
of the monomial $x_1^{2k-1}x_2^{2k-1}\cdots x_{2k}^{2k-1}$ in $P$ is
nonzero.  We show this coefficient is exactly $c_k$.

\bigskip

% - - - - - - - - - - - - - - - - - - - - - - - - - - - - - - - - - - - - -

Compare the coefficient of $x_1^{2k-1}x_2^{2k-1}\cdots
x_{2k}^{2k-1}$ in  the polynomials
\begin{eqnarray}
  P(x_1,\dots,x_{2k}) &=&
  \sum_{\pi\in\Sym{2k}} \sgn(\pi) x_{\pi(2)}x_{\pi(3)}^2\cdots x_{\pi(2k)}^{2k-1}
  \prod_{\ell=1}^{2k-1} \prod_{m\in \M} \big((x_1+\cdots+x_\ell)-m)\big),
  \nonumber \\
  P_1(x_1,\dots,x_{2k}) &=&
  \sum_{\pi\in\Sym{2k}} \sgn(\pi) x_{\pi(2)}x_{\pi(3)}^2\cdots x_{\pi(2k)}^{2k-1}
  \prod_{\ell=1}^{2k-1} (x_1+\cdots+x_\ell)^k,
  \nonumber
\end{eqnarray}
and
\begin{eqnarray}
  P_2(x_1,\dots,x_{2k}) &=&
  \sum_{\pi\in\Sym{2k}} \sgn(\pi) x_2x_3^2\cdots x_{2k}^{2k-1}
  \prod_{\ell=1}^{2k-1}
  (x_{\pi^{-1}(1)}+\cdots+x_{\pi^{-1}(\ell)})^k
  \nonumber \\
  &=& x_2x_3^2\cdots x_{2k}^{2k-1} \cdot Q(x_1,\dots,x_{2k})
  \nonumber \\
  &=& c_k \cdot x_2x_3^2\cdots x_{2k}^{2k-1} \cdot (-1)^k V(x_1,\dots,x_{2k})
  \nonumber \\
  &=& c_k \cdot x_2x_3^2\cdots x_{2k}^{2k-1} \cdot V(x_{2k},\dots,x_1).
  \label{eq:q2}
\end{eqnarray}
The difference between $P$ and $P_1$ is only in the constants $m$; the
maximal degree terms are the same.

For a fixed $\pi\in\Sym{2k}$, the polynomials
$$
x_{\pi(2)}x_{\pi(3)}^2\cdots x_{\pi(2k)}^{2k-1}
\prod_{\ell=1}^{2k-1} (x_1+\cdots+x_\ell)^k \\
$$
and
$$
x_2x_3^2\cdots x_{2k}^{2k-1}
\prod_{\ell=1}^{2k-1} (x_{\pi^{-1}(1)}+\cdots+x_{\pi^{-1}(\ell)})^k
$$
differ only in the order of the variables. Hence, the coefficients of
the monomial $x_1^{2k-1}x_2^{2k-1}\cdots x_{2k}^{2k-1}$ are the same
in $P$, $P_1$, and $P_2$.  From the last line of \eqref{eq:q2} it can
be seen that this coefficient is~$c_k$.

\medskip

Hence, the method of alternating sums (Section \ref{sect:polys}) and
applying the combinatorial Nullstellensatz as above are closely
related and lead to the same difficulty, i.e., to proving~$c_k\ne0$.

%--------------------------------------------------------------------------

\subsection{The case of odd $n$}
\label{sect:oddN}

Now we finish the proof of Theorem~\ref{thm:tn}.  Let $n=2k+1$ be an
odd number. The case $n=1$ is trivial, so assume $k\ge1$.

The proof works by inserting the value $0$ into, or removing it from,
the list $a_1,\ldots,a_n$ of jumps and then applying
Theorem~\ref{thm:tn} in the even case which is already proved.

\medskip

\noindent\textit{Case 1: The value $0$ appears among
  $a_1,\dots,a_{2k+1}$.}
Without loss of generality we can assume $a_{2k+1}=0$.  Suppose
${|\M|\le\left\lfloor n/2 \right\rfloor=k}$, and apply
Theorem~\ref{thm:tn} to the numbers $a_1,\ldots,a_{2k}$ and the
set~$\M$. The theorem provides a permutation $(b_1,\dots,b_{2k})$ of
$(a_1,\dots,a_{2k})$ such that $b_1$, $b_1+b_2$, \dots,
$b_1+\dots+b_{2k-1}$ are not elements of $\M$. Insert the value $0$
somewhere in the middle, say between the first and second
positions. Then $(b_1,0,b_2,\dots,b_{2k})$ is a permutation of
$(a_1,\dots,a_{2k},0)$, with all the required properties.

\medskip

\noindent\textit{Case 2: The numbers $a_1,\dots,a_{2k+1}$ are all
  nonzero.}
Suppose $|\M|\le\left\lfloor(n+1)/2\right\rfloor=k+1$, and apply
Theorem~\ref{thm:tn} to the numbers $a_1,\dots,a_{2k+1},a_{2k+2}=0$
and the set $\M$. By the theorem, there is a permutation
$(b_1,\dots,b_{2k+2})$ of $(a_1,\dots,a_{2k+1},0)$ such that none of
$b_1$, \dots, $b_1+\dots+b_{2k+1}$ is an element of $\M$.  Deleting
the value $0$ from the sequence $(b_1,\dots,b_{2k+2})$, we obtain the
permutation we want. \qed

%==========================================================================

\bigskip

\section{Closing remarks}

The high degree of $P$ prevented the application of Alon's
combinatorial Nullstellensatz from solving the original Olympiad
problem. We have demonstrated that if the constraint that the jumps
are positive is removed, it allows us to use the polynomial
method. This also shows that the sign condition was the real reason
behind the degree being too high.

%--------------------------------------------------------------------------

\subsection{Extension to finite fields}

Finite fields and commutative groups need further consideration. The
examples in Table~\ref{tab:examples} are also valid in cyclic
(additive) groups, in particular in prime fields. However, our proof
does not work directly for prime fields, because the constants $c_k$
have huge prime divisors, as demonstrated below.
\begin{eqnarray*}
  c_3 &=& 2 \cdot 3^2 \cdot 5 \\
  c_4 &=& 2^5 \cdot 3^3 \cdot 7 \cdot 97 \\
  c_5 &=& 2^2 \cdot 3 \cdot 5^4 \cdot 7 \cdot 79 \cdot 103 \cdot 4\,483 \\
  c_6 &=& 2^5 \cdot 3^6 \cdot 5^2 \cdot 11 \cdot 23 \cdot 223 \cdot 239 \cdot
  1\,002\,820\,739.
\end{eqnarray*}
So, for finite fields and groups, the problem is not yet closed.

%--------------------------------------------------------------------------

\subsection{Alternating sums vs. combinatorial Nullstellensatz}

When we need a permutation of a finite sequence of numbers with some
required property, the method applied in Section~\ref{sect:polys} can
be a replacement for the combinatorial Nullstellensatz.

\medskip

Let $a_1,\dots,a_n$ be distinct elements of a field $F$, and let $R\in
F[x_1,\dots,x_n]$ be a polynomial with degree $\binom{n}{2}$. Suppose that we
need a permutation $(b_1,\dots,b_n)$ of $(a_1,\dots,a_n)$ such that
$R(b_1,\dots,b_n)\ne0$. For such problems, it has become standard to apply the
combinatorial Nullstellensatz to the polynomial $V\cdot R$. The method we used
is different: we considered the value
\begin{equation}
  \label{eq:modpol1}
  \sum_{\pi\in\Sym{n}} \sgn(\pi) R(a_{\pi(1)},\dots,a_{\pi(n)})
  = c\cdot V(a_1,a_2,\dots,a_n)
\end{equation}
where the constant $c$ is (up to the sign) the same as the coefficient
of $x_1^{n-1}\cdots x_n^{n-1}$ in the polynomial
$V(x_1,\dots,x_n)\cdot R(x_1,\dots,x_n)$, as was emphasized in
Section~\ref{sect:altCN}.

\medskip

Note, however, that it is easy to extend the alternating sum approach
of~\eqref{eq:modpol1} to a more general setting. By inserting an
arbitrary auxiliary polynomial $A\in F[x_1,\dots,x_n]$, we may
consider the number
\begin{equation}
  \label{eq:modpol2}
  \sum_{\pi\in\Sym{n}} \sgn(\pi) \cdot
  R(a_{\pi(1)},\dots,a_{\pi(n)}) \cdot
  A(a_{\pi(1)},\dots,a_{\pi(n)})
\end{equation}
as well. It may happen that the sum in~\eqref{eq:modpol2} is not $0$
even when the sum of~\eqref{eq:modpol1} vanishes.

%==========================================================================

\subsection*{Appendix: A solution for the Olympiad problem}

For the sake of completeness, we outline a solution for the original
Olympiad problem in which all jumps were positive. We prove the
following statement.

\begin{Thm*}
  Suppose that $a_1,a_2,\dots,a_n$ are distinct positive real numbers
  and let $\M$ be a set of at most $n-1$ real numbers. Then there
  exists a permutation $(i_1,\dots,i_n)$ of $(1,2,\dots,n)$ such that
  none of the numbers $a_{i_1}+a_{i_2}+\dots+a_{i_k}$ ($1\le k\le
  n-1$) is an element of~$\M$.
\end{Thm*}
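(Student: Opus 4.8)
The plan is to prove the statement by strong induction on $n$, at each step committing either the very first or the very last jump and invoking the inductive hypothesis on the remaining $n-1$ lengths. The case $n=1$ is vacuous. For $n\ge2$ I begin with three harmless normalizations: (a) discard from $\M$ every element $\le0$ or $\ge s:=a_1+\dots+a_n$, since a forbidden partial sum $a_{i_1}+\dots+a_{i_k}$ with $1\le k\le n-1$ is a sum of between one and $n-1$ strictly positive terms and hence lies strictly inside $(0,s)$; (b) relabel so that $a_1<a_2<\dots<a_n$; (c) enlarge $\M$ to have exactly $n-1$ elements, which only makes the task harder. If after (a) the largest jump exceeds every mine, put $a_n$ first and we are done at once (every later position is then $>a_n$), so assume henceforth $a_n\le\max\M$.

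Write $P_i$ for the $i$-th partial sum of a candidate ordering. The two reductions driving the induction are: (1) if $a_j\notin\M$, making $a_j$ the first jump leaves the grasshopper at $a_j$ with all later positions exceeding $a_j$, so it must avoid only the mines $>a_j$; as $|\M|=n-1$ this set has at most $n-2$ elements precisely when $\min\M<a_j$, and then the inductive hypothesis on $a_1,\dots,\widehat{a_j},\dots,a_n$ finishes. (2) Dually, if $s-a_j\notin\M$, making $a_j$ the last jump keeps every $P_i$ with $i\le n-2$ below $s-a_j$, so only the mines $<s-a_j$ matter; this set has at most $n-2$ elements precisely when $\max\M>s-a_j$, again finishing by induction. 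Call $j$ \emph{useful} if (1) or (2) applies. It remains to produce a useful index $j$, and this is where the work lies.

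Suppose no index is useful. Then for every $j$ we have ($a_j\in\M$ or $a_j\le\min\M$) and ($s-a_j\in\M$ or $a_j\le s-\max\M$); hence every $a_j\ge\min\M$ lies in $\M$, and $s-a_j\in\M$ for every $a_j\ge s-\max\M$. So the sets $\{a_j:a_j\ge\min\M\}$ and $\{s-a_j:a_j\ge s-\max\M\}$, of sizes $n-r$ and $n-p$ (where $r\ge1$ and $p\ge1$ count the jumps below $\min\M$ and below $s-\max\M$), both inject into $\M$; for $n\ge3$ a jump can equal $s$ minus a jump only if $2a_i=s$ for some $i$, so the two images overlap in at most one point, and inclusion–exclusion yields $(n-r)+(n-p)\le n$, i.e.\ $r+p\ge n$. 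This leaves a short list of near-extremal configurations, the genuinely delicate one being $r=p=n$, where every mine lies strictly between $a_n$ and $a_1+\dots+a_{n-1}$. There $P_1$ (a single jump, hence $\le a_n<\min\M$) and the last partial sum (hence $\ge a_1+\dots+a_{n-1}>\max\M$) are automatically safe, so only $P_2,\dots,P_{n-2}$ need attention: for $n\le4$ this is empty or a single $2$-element subset-sum, which can be dodged since $n$ distinct reals have more than $n-1$ distinct pairwise sums; handling it uniformly for all $n$ is, I believe, the real difficulty. In the remaining near-extremal cases $\M$ is pinned down — up to the single possible coincidence $2a_i=s$ — to the union of those two sets, tightly enough to write down an explicit safe ordering (typically keeping $a_n$ interior and $a_1$ last).

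The step I expect to be the real obstacle is exactly this endgame: certifying that every near-extremal configuration admits a safe ordering, and organizing the inclusion–exclusion bookkeeping — which mine is shed by which reduction, plus the few small $n$ — so that it is airtight. A uniform alternative worth attempting first is a greedy/exchange construction: build the order from the largest remaining length, and whenever a partial sum lands on a mine, swap the offending jump with the next-smaller one so as to push that partial sum downward; the analysis then reduces to the purely combinatorial claim that a stall would force pairwise-distinct mines too numerous for $|\M|\le n-1$.
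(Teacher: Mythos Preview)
Your proposal is not a proof but a plan with an explicitly acknowledged gap: you reduce to the situation where no index is ``useful'' (no jump can be committed to the first or last slot via reductions (1) and (2)), derive the inequality $r+p\ge n$, and then concede that the ensuing case analysis---in particular the configuration you label $r=p=n$---is ``the real difficulty'' and leave it open. The greedy/exchange alternative you float at the end is likewise only a suggestion, not an argument. So as written there is no complete proof, and the inclusion--exclusion bound $r+p\ge n$ by itself does not pin $\M$ down tightly enough to make the remaining casework short.

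The structural reason your framework stalls is that you insist on committing a jump to the \emph{first} or the \emph{last} position before invoking the inductive hypothesis. The paper's proof avoids exactly this rigidity: it always singles out the largest jump $a_n$ and chooses its position \emph{after} the inductive call. When $a_n\notin\M$, one applies the hypothesis to $a_1,\dots,a_{n-1}$ against the shifted mine set $\{m_2-a_n,\dots,m_{n-1}-a_n\}$ (the smallest mine $m_1$ is simply discarded), obtaining an order $(j_1,\dots,j_{n-1})$; then $a_n$ is inserted just before the first partial sum that reaches $m_1$. The prefix sums before the insertion point are all $<m_1$, and those after exceed $m_1$ (since $a_n$ dominates the jump it replaces) and avoid $m_2,\dots,m_{n-1}$ by the shifted hypothesis. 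The case $a_n=m_\ell\in\M$ is handled analogously with $\M'=\{m_1,\dots,m_{\ell-1}\}\cup\{m_{\ell+1}-a_n,\dots,m_{n-1}-a_n\}$ and insertion at position~2. This freedom in the insertion point---deciding where $a_n$ goes only after seeing the inductive ordering---is precisely the idea that dissolves your endgame.
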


\begin{proof}
  We employ induction on $n$. In the case of $n=1$ the statement is
  trivial since the set $\M$ is empty; moreover there is no $k$ with
  $1\le k\le n-1$.

  Suppose that $n\ge2$ and the statement is true for all smaller
  values. Without loss of generality, we can assume that $a_n$ is the
  greatest among $a_1,a_2,\dots,a_n$. Since we can insert extra
  elements into $\M$, we can also assume that $\M$ has $n-1$ elements
  which are $m_1<\dots<m_{n-1}$.

  If $a_1+\dots+a_{n-1}<m_1$ then we can choose $i_n=n$ and any order
  of the indices $1,2,\dots,n-1$. So we can assume $m_1\le
  a_1+\dots+a_{n-1}$ as well.

  \medskip

  We consider two cases.

  \medskip

  \textit{Case 1: $a_n\in\M$.}
  Let $\ell$ be the index for which $a_n=m_\ell$, and let
  $\M'=\{m_1,\dots,m_{\ell-1}\}\cup\{m_{\ell+1}-a_n,\dots,m_{n-1}-a_n\}$.
  By applying the induction hypothesis to the jumps
  $a_1,\dots,a_{n-1}$ and the set $\M'$, we get a permutation
  $(j_1,\dots,j_{n-1})$ of the indices $(1,2,\dots,n-1)$ such that
  $a_{j_1}+a_{j_2}+\dots+a_{j_k}\not\in\M'$ for any $k$ such that
  $1\le k\le n-2$.

  Choose $(i_1,\dots,i_n)=(j_1,n,j_2,j_3,\dots,j_{n-1})$. We claim
  that this permutation of $(1,2,\dots,n)$ satisfies the required
  property.

  By the choice of $i_1$, we have
  $a_{i_1}=a_{j_1}\not\in\{m_1,\dots,m_{\ell-1}\}$. Since
  $a_{i_1}<a_n=m_\ell$, it follows that
  $a_{i_1}\not\in\{m_{\ell},\dots,m_{n-1}\}$ and thus
  $a_{i_1}\not\in\M$.

  For $2\le k\le n-1$ we have $a_{i_1}+\dots+a_{i_k}\ge
  a_{j_1}+a_n>m_\ell$, so
  $a_{i_1}+\dots+a_{i_k}\not\in\{m_1,\dots,m_\ell\}$. Moreover, since
  $a_{j_1}+\dots+a_{j_{k-1}}\not\in\{m_{\ell+1}-a_n,\dots,m_{n-1}-a_n\}$,
  we also have $a_{i_1}+\dots+a_{i_k}=a_{j_1}+\dots+a_{j_{k-1}}+a_n
  \not\in\{m_{\ell+1},\dots,m_{n-1}\}$.  Hence,
  $a_{i_1}+\dots+a_{i_k}\not\in\M$.

  \medskip

  \textit{Case 2: $a_n\not\in\M$.}
  Let $\M'=\{m_2-a_n,\dots,m_{n-1}-a_n\}$, and apply the induction
  hypothesis to the jumps $a_1,\dots,a_{n-1}$ and the set $\M'$. We
  get a permutation $(j_1,\dots,j_{n-1})$ of $(1,2,\dots,n-1)$ such
  that $a_{j_1}+a_{j_2}+\dots+a_{j_k}\not\in\M'$ for any $k$ with
  $1\le k\le n-2$.

  Let $\ell$, $1\le\ell\le n-1$, be the first index for which
  $a_{j_1}+\dots+a_{j_{\ell}}\ge m_1$. (By the assumption $m_1\le
  a_1+\dots+a_{n-1}$, there exists such an index.) Then choose
  $(i_1,\dots,i_n)=(j_1,\dots,j_{\ell-1},n,j_\ell,\dots,j_{n-1})$. (If
  $\ell=1$, choose $(n,j_1,\dots,j_{n-1})$.) We show that this
  permutation fulfills the desired property.

  For $1\le k\le\ell-1$ we have $a_{i_1}+\dots+a_{i_k}\le
  a_{j_1}+\dots+a_{j_{\ell-1}}<m_1$, so
  $a_{i_1}+\dots+a_{i_k}\not\in\M$.

  For $\ell\le k\le n-1$ we have $a_{i_1}+\dots+a_{i_k}\ge
  a_{j_1}+\dots+a_{j_{\ell}-1}+a_n>
  a_{j_1}+\dots+a_{j_{\ell}-1}+a_{j_\ell}\ge m_1$. Since
  $a_{j_1}+\dots+a_{j_{k-1}}\not\in\M'$, it follows that
  $a_{i_1}+\dots+a_{i_k}=a_{j_1}+\dots+a_{j_{k-1}}+a_n
  \not\in\{m_2,\dots,m_{n-1}\}$.  Hence,
  $a_{i_1}+\dots+a_{i_k}\not\in\M$.
\end{proof}

%==========================================================================

\paragraph{Acknowledgments.}
Discussions on the subject with Lajos R\'onyai are gratefully
acknowledged.  This work was supported in part by OTKA grant NK72845.

%==========================================================================

%==========================================================================

\bigskip

\noindent\textbf{G\'eza K\'os}
received his Ph.D. from Lor\'and E\"otv\"os University in Budapest,
where he currently teaches real and complex analysis. He is a member
of the editorial board of \textit{K\"oMaL}. He has served on the
problem selection committee of the International Mathematical Olympiad
in 2006, 2007, 2008, and 2010.

\noindent\textit{Department of Analysis, Lor\'and E\"otv\"os
  University, P\'azm\'any s. 1/c, Budapest, Hungary
  \\
  Computer and Automation Research Institute, Kende u. 13-17,
  Budapest, Hungary
  \\
  kosgeza@sztaki.hu}

%==========================================================================

\end{document}